\newtheorem{thm}{Theorem}[section]
\newtheorem{lem}[thm]{Lemma}
\newtheorem{prop}[thm]{Proposition}
\newtheorem{cor}[thm]{Corollary}
\theoremstyle{definition}
\numberwithin{equation}{section}
\def\imod#1{\allowbreak\mkern5mu({\operator@font mod}\,\,#1)}
\begin{document}

\title[$q$-hypergeometric double sums as mock theta functions]
{$q$-hypergeometric double sums as mock theta functions} 
 
\author{Jeremy Lovejoy and Robert Osburn}

\address{CNRS, LIAFA, Universit{\'e} Denis Diderot - Paris 7, Case 7014, 75205 Paris Cedex 13, FRANCE}

\address{School of Mathematical Sciences, University College Dublin, Belfield, Dublin 4, Ireland}

\email{lovejoy@liafa.jussieu.fr}

\email{robert.osburn@ucd.ie}

\subjclass[2010]{Primary: 33D15; Secondary: 05A30, 11F03, 11F37}
\keywords{}

\date{\today}

\begin{abstract}
Recently, Bringmann and Kane established two new Bailey pairs and used them to relate certain $q$-hypergeometric series to real quadratic fields. We show how these pairs give rise to new mock theta functions in the form of $q$-hypergeometric double sums.  Additionally, we prove an identity between one of these sums and two classical mock theta functions introduced by Gordon and McIntosh.
\end{abstract}
 
\dedicatory{In memory of Basil Gordon}

\maketitle

\section{Introduction}
A \emph{Bailey pair} relative to $a$ is a pair of sequences $(\alpha_n,\beta_n)_{n \geq 0}$ satisfying

\begin{equation} \label{pairdef}
\beta_n = \sum_{k=0}^n \frac{\alpha_k}{(q)_{n-k}(aq)_{n+k}}. 
\end{equation} 
Here we have used the standard $q$-hypergeometric notation, 

\begin{equation*}
(a)_n = (a;q)_n = \prod_{k=1}^{n} (1-aq^{k-1}),
\end{equation*}
valid for $n \in \mathbb{N} \cup \{\infty\}$.
The \emph{Bailey lemma} states that if $(\alpha_n,\beta_n)$ is a Bailey pair relative to $a$, then so is $(\alpha'_n,\beta'_n)$, where

\begin{equation} \label{alphaprimedef}
\alpha'_n = \frac{(b)_n(c)_n(aq/bc)^n}{(aq/b)_n(aq/c)_n}\alpha_n
\end{equation} 

\noindent and

\begin{equation} \label{betaprimedef}
\beta'_n = \sum_{k=0}^n\frac{(b)_k(c)_k(aq/bc)_{n-k} (aq/bc)^k}{(aq/b)_n(aq/c)_n(q)_{n-k}} \beta_k.
\end{equation}
Inserting \eqref{alphaprimedef} and \eqref{betaprimedef} into \eqref{pairdef} with $n \to \infty$ gives

\begin{equation} \label{limitBailey}
\sum_{n \geq 0} (b)_n(c)_n (aq/bc)^n \beta_n = \frac{(aq/b)_{\infty}(aq/c)_{\infty}}{(aq)_{\infty}(aq/bc)_{\infty}} \sum_{n \geq 0} \frac{(b)_n(c)_n(aq/bc)^n }{(aq/b)_n(aq/c)_n}\alpha_n,
\end{equation} 

\noindent valid whenever both sides converge.  For more on Bailey pairs, including historical perspectives and recent advances, see Chapter 3 of \cite{An2}, \cite{An3},  or \cite{war}.

In a recent study of multiplicative $q$-series, Bringmann and Kane \cite{bk} established two new and interesting Bailey pairs.   They showed that $(a_n,b_n)$ is a Bailey pair relative to $1$, where

\begin{equation} \label{a2n}
a_{2n} = (1-q^{4n})q^{2n^2-2n}\sum_{j=-n}^{n-1}q^{-2j^2-2j},
\end{equation}

\begin{equation} \label{a2n+1}
a_{2n+1} = -(1-q^{4n+2})q^{2n^2}\sum_{j=-n}^{n} q^{-2j^2},
\end{equation}

\noindent and

\begin{equation} \label{bn}
b_n = \frac{(-1)^n(q;q^2)_{n-1}}{(q)_{2n-1}} \chi(\text{$n \neq 0$}),
\end{equation}

\noindent and $(\alpha_n,\beta_n)$ is a Bailey pair relative to $q$, where

\begin{equation} \label{alpha2n}
\alpha_{2n} = \frac{1}{1-q}\left(q^{2n^2+2n}\sum_{j=-n}^{n-1}q^{-2j^2-2j} + q^{2n^2}\sum_{j=-n}^{n} q^{-2j^2}\right),
\end{equation}

\begin{equation} \label{alpha2n+1}
\alpha_{2n+1} = -\frac{1}{1-q}\left(q^{2n^2+4n+2}\sum_{j=-n}^{n} q^{-2j^2} + q^{2n^2+2n}\sum_{j=-n-1}^n q^{-2j^2-2j}\right),
\end{equation}

\noindent and 

\begin{equation} \label{betan}
\beta_n = \frac{(-1)^n(q;q^2)_n}{(q)_{2n+1}}.
\end{equation}

These closely resemble Bailey pairs related to $7$th order mock theta functions \cite{An1.5}, but surprisingly no $q$-series obtained by a direct substitution of either \eqref{a2n}--\eqref{bn} or \eqref{alpha2n}--\eqref{betan} in \eqref{limitBailey} is a genuine mock theta function.  For example, it turns out that substituting \eqref{a2n}--\eqref{bn} in \eqref{limitBailey} with $b$, $c \to \infty$ yields 

\begin{equation*}
\frac{-q}{(-q)_{\infty}} \omega(q)
\end{equation*}

\noindent where $\omega(q)$ is one of the third order mock theta functions.  The presence of the infinite product means that this is not a mock theta function but a \emph{mixed mock modular form}.  

Recall that mock theta functions are $q$-series which were introduced by Ramanujan in his last letter to G. H. Hardy on January 12, 1920. Until 2002, it was not known how these functions fit into the theory of modular forms. Thanks to work of Zwegers \cite{Zw1} and Bringmann and Ono \cite{Br-On1,Br-On2}, we now know that each of Ramanujan's examples of mock theta functions is the holomorphic part of a weight $1/2$ harmonic weak Maass form $f(\tau)$ (as usual, $q := e^{2 \pi i \tau}$ where $\tau=x + iy \in \mathbb{H}$). Following Zagier \cite{Za1}, the holomorphic part of any weight $k$ harmonic weak Maass form $f$ is called a mock modular form of weight $k$.  If $k=1/2$ and the image of $f$ under the operator $\xi_k := 2iy^k \frac{\overline{\partial}}{\partial \overline{\tau}}$ is a unary theta function, then the holomorphic part of $f$ is called a mock theta function.    Specializations of the Appell-Lerch series
\begin{equation*} 
m(x,q,z) := \frac{1}{j(z,q)} \sum_{r \in \mathbb{Z}} \frac{(-1)^r q^{\binom{r}{2}} z^r}{1-q^{r-1} xz}
\end{equation*}
are perhaps the most well-known and most important class of mock theta functions \cite{Za1,Zw1}.  Here $x$, $z \in \mathbb{C}^{*}:=\mathbb{C} \setminus \{ 0 \}$ with neither $z$ nor $xz$ an integral power of $q$, and 
$$
j(x,q):=(x)_{\infty} (q/x)_{\infty} (q)_{\infty}.
$$
For more on mock theta functions, their remarkable history and modern developments, see \cite{On1} and \cite{Za1}.

The goal of this paper is to obtain genuine mock theta functions from the Bailey pairs of Bringmann and Kane by first moving a step along the Bailey chain. Applying \eqref{alphaprimedef} and \eqref{betaprimedef} to $(a_n,b_n)$ with $(b,c) \to (-1,\infty)$ and to $(\alpha_n,\beta_n)$ with $(b,c) \to (-q,\infty)$, we obtain the Bailey pairs recorded in the following two lemmas.

\begin{lem} \label{lemma1}
The pair $(a_n',b_n')$ is a Bailey pair relative to $1$, where

\begin{equation*} \label{a2nprime}
a_{2n}' = 2(1-q^{2n})q^{4n^2-n}\sum_{j=-n}^{n-1}q^{-2j^2-2j},
\end{equation*}

\begin{equation*} \label{a2n+1prime}
a_{2n+1}' = -2(1-q^{2n+1})q^{4n^2+3n+1}\sum_{j=-n}^{n} q^{-2j^2},
\end{equation*}

\noindent and

\begin{equation*} \label{bnprime}
b_n' = \frac{1}{(-q)_n}\sum_{j=1}^n \frac{(-1)_j(q;q^2)_{j-1}(-1)^jq^{\binom{j+1}{2}}}{(q)_{n-j}(q)_{2j-1}}.
\end{equation*}
\end{lem}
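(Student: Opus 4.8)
The plan is to apply the Bailey lemma in the form \eqref{alphaprimedef}--\eqref{betaprimedef} to the Bailey pair $(a_n,b_n)$ of \eqref{a2n}--\eqref{bn}, which is a Bailey pair relative to $a=1$, with the specialization $b=-1$ and $c\to\infty$. Since each entry $b_n'$ is by construction a \emph{finite} sum, the limit $c\to\infty$ can be taken term by term and no question of convergence arises; the only analytic input is the elementary limit $(c;q)_k\sim(-c)^kq^{\binom{k}{2}}$ as $c\to\infty$, together with $(q/c)_n\to 1$ and $(q/bc)_{n-k}\to 1$.

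On the $\alpha$-side, set $a=1$ and $b=-1$ in \eqref{alphaprimedef}, so that $aq/b=-q$ and $aq/bc=-q/c$. Letting $c\to\infty$, the sign $(-1)^n$ produced by $(c;q)_n$ cancels the one in $(-q/c)^n=(-1)^nq^nc^{-n}$, leaving
\begin{equation*}
a_n'=\frac{(-1)_n}{(-q)_n}\,q^{\binom{n}{2}+n}\,a_n=\frac{(-1)_n}{(-q)_n}\,q^{\binom{n+1}{2}}\,a_n.
\end{equation*}
I would then split into the even and odd cases and insert \eqref{a2n} and \eqref{a2n+1}. Using $(-1;q)_m=2(-q;q)_{m-1}$ for $m\ge 1$ gives $(-1)_{2n}/(-q)_{2n}=2/(1+q^{2n})$ and $(-1)_{2n+1}/(-q)_{2n+1}=2/(1+q^{2n+1})$, which combine with $(1-q^{4n})/(1+q^{2n})=1-q^{2n}$ and $(1-q^{4n+2})/(1+q^{2n+1})=1-q^{2n+1}$; collecting the resulting powers of $q$ via $\binom{2n+1}{2}=2n^2+n$ and $\binom{2n+2}{2}=2n^2+3n+1$ then yields exactly the stated formulas for $a_{2n}'$ and $a_{2n+1}'$.

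On the $\beta$-side, set $a=1$ in \eqref{betaprimedef} and let $c\to\infty$: the factors $(q/bc)_{n-k}$ and $(q/c)_n$ tend to $1$ while $(c)_k(q/bc)^k\to(-1)^kq^{\binom{k}{2}}(q/b)^k$, so that
\begin{equation*}
b_n'=\frac{1}{(q/b)_n}\sum_{k=0}^{n}\frac{(b)_k\,(-1)^kq^{\binom{k}{2}}(q/b)^k}{(q)_{n-k}}\,b_k.
\end{equation*}
Setting $b=-1$, the factor $(-1)^k$ cancels against $(q/b)^k=(-q)^k=(-1)^kq^k$, giving $b_n'=\frac{1}{(-q)_n}\sum_{k=0}^{n}\frac{(-1)_k\,q^{\binom{k+1}{2}}}{(q)_{n-k}}\,b_k$. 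Finally inserting \eqref{bn}, the $k=0$ term is killed by the factor $\chi(n\ne 0)$, so the sum starts at $k=1$, and relabelling $k\to j$ reproduces the asserted expression for $b_n'$ in Lemma~\ref{lemma1}; note that the surviving sign $(-1)^j$ there comes entirely from $b_j$ itself.

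The whole argument is routine bookkeeping, so there is no genuine obstacle; the only place where a little care is needed is in tracking the three $c$-dependent factors as $c\to\infty$ and, in particular, in seeing that the two sources of the sign $(-1)^k$ — the one from $(c;q)_k\sim(-c)^kq^{\binom{k}{2}}$ and the one from the power $(q/b)^k$ at $b=-1$ — cancel, so that no spurious sign is introduced on either the $\alpha$- or the $\beta$-side.
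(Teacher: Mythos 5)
Your proposal is correct and is exactly the paper's route: the lemma is obtained by specializing the Bailey lemma \eqref{alphaprimedef}--\eqref{betaprimedef} to the Bringmann--Kane pair \eqref{a2n}--\eqref{bn} with $(b,c)\to(-1,\infty)$, which the paper states without writing out the bookkeeping you supply. All of your limits, sign cancellations, and exponent computations check out.
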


\begin{lem} \label{lemma2}
The pair $(\alpha_n',\beta_n')$ is a Bailey pair relative to $q$, where

\begin{equation*} \label{alpha2nprime}
\alpha_{2n}' = \frac{1}{1-q}\left(q^{4n^2+3n}\sum_{j=-n}^{n-1}q^{-2j^2-2j} + q^{4n^2+n}\sum_{j=-n}^{n} q^{-2j^2}\right),
\end{equation*}

\begin{equation*} \label{alpha2n+1prime}
\alpha_{2n+1}' = -\frac{1}{1-q}\left(q^{4n^2+7n+3}\sum_{j=-n}^{n} q^{-2j^2} + q^{4n^2+5n+1}\sum_{j=-n-1}^n q^{-2j^2-2j}\right),
\end{equation*}

\noindent and 

\begin{equation*} \label{betanprime}
\beta_n' = \frac{1}{(-q)_n} \sum_{j=0}^n \frac{(-q)_j(q;q^2)_j(-1)^jq^{\binom{j+1}{2}}}{(q)_{n-j}(q)_{2j+1}}.
\end{equation*}
\end{lem}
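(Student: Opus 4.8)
The plan is to read off Lemma~\ref{lemma2} directly from the Bailey lemma applied to the pair $(\alpha_n,\beta_n)$ of \eqref{alpha2n}--\eqref{betan}, which is a Bailey pair relative to $a=q$. I would specialize \eqref{alphaprimedef} and \eqref{betaprimedef} to $b=-q$ and $c\to\infty$. The first useful observation is that $aq/b = q^2/(-q) = -q$, so $(b)_n = (-q)_n = (aq/b)_n$ and these factors cancel; all that is left is to evaluate the limit $c\to\infty$ of the factors that involve $c$.

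On the $\alpha$-side, as $c\to\infty$ we have $(aq/c)_n = (q^2/c)_n \to 1$, while $(c)_n \sim (-c)^n q^{\binom{n}{2}}$ makes $(c)_n(aq/bc)^n = (c)_n(-q/c)^n \to q^{\binom{n}{2}+n} = q^{\binom{n+1}{2}}$. Hence \eqref{alphaprimedef} collapses to $\alpha_n' = q^{\binom{n+1}{2}}\alpha_n$. Substituting $n\mapsto 2n$ contributes the power $q^{\binom{2n+1}{2}} = q^{2n^2+n}$ and $n\mapsto 2n+1$ contributes $q^{\binom{2n+2}{2}} = q^{2n^2+3n+1}$; multiplying these into \eqref{alpha2n} and \eqref{alpha2n+1} and collecting exponents produces exactly the stated expressions for $\alpha_{2n}'$ and $\alpha_{2n+1}'$.

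On the $\beta$-side, in \eqref{betaprimedef} one has, as $c\to\infty$, that $(aq/bc)_{n-k} = (-q/c)_{n-k}\to 1$ and $(aq/c)_n\to 1$, while $(c)_k(aq/bc)^k\to q^{\binom{k+1}{2}}$ by the same computation as above. Therefore
\[
\beta_n' \;=\; \frac{1}{(-q)_n}\sum_{k=0}^{n}\frac{(-q)_k\,q^{\binom{k+1}{2}}}{(q)_{n-k}}\,\beta_k,
\]
and inserting $\beta_k = (-1)^k(q;q^2)_k/(q)_{2k+1}$ from \eqref{betan} (and renaming the index $k$ to $j$) gives the claimed formula for $\beta_n'$. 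That $(\alpha_n',\beta_n')$ is then a Bailey pair relative to $q$ needs no separate verification, since it is guaranteed by the Bailey lemma; Lemma~\ref{lemma1} is obtained in precisely the same way from $(a_n,b_n)$ with $b=-1$ and $c\to\infty$, the only extra step being the simplification $(-1)_n/(-q)_n = 2/(1+q^n)$ on the $\alpha$-side, combined with $1-q^{4n} = (1-q^{2n})(1+q^{2n})$ and the analogous factorization of $1-q^{4n+2}$.

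There is no real obstacle: the whole thing is bookkeeping within the Bailey machinery, and every sum in sight is finite so the manipulations are automatically valid. The one place where care is needed is the simultaneous limit $c\to\infty$ in $(c)_k(aq/bc)^k$, where the linear growth of $(c)_k$ in $c$ is exactly cancelled by the $c^{-k}$ in $(aq/bc)^k$; pinning down the surviving power of $q$ correctly (and confirming that $(aq/bc)_{n-k}$ and $(aq/c)_n$ tend to $1$ rather than blowing up) is the only step where an error could creep in.
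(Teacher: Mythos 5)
Your proposal is correct and is exactly the paper's route: the authors obtain Lemma~\ref{lemma2} by applying \eqref{alphaprimedef} and \eqref{betaprimedef} to the Bailey pair \eqref{alpha2n}--\eqref{betan} relative to $a=q$ with $(b,c)\to(-q,\infty)$, and your limit computations (the cancellation $(b)_n/(aq/b)_n=1$, the surviving factor $q^{\binom{n+1}{2}}$ from $(c)_n(aq/bc)^n$, and the resulting exponents $2n^2+n$ and $2n^2+3n+1$) all check out.
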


With our main result, we present four mock theta functions arising from the Bailey pairs in Lemmas \ref{lemma1} and \ref{lemma2}.   Define 
\begin{equation*}
\begin{aligned}
& \theta_{n,p}(x,y,q) := \frac{1}{\overline{J}_{0, np(2n+p)}} \sum_{r^{*} = 0}^{p-1} \sum_{s^{*}=0}^{p-1} q^{n\binom{r-(n-1)/2}{2} + (n+p)(r - (n-1)/2)(s+ (n+1)/2) + n \binom{s + (n+1)/2}{2}} \\
& \times \frac{(-x)^{r - (n-1)/2} (-y)^{s + (n+1)/2} J_{p^{2} (2n+p)}^{3} j(-q^{np(s-r)} x^{n} / y^{n}, q^{np^2}) j(q^{p(2n+p)(r+s) + p(n+p)} x^{p} y^{p}, q^{p^2 (2n + p)})}{j(q^{p(2n+p)r + p(n+p)/2} (-y)^{n+p} / (-x)^{n}, q^{p^{2} (2n+p)}) j(q^{p(2n+p)s + p(n+p)/2} (-x)^{n+p} / (-y)^{n}, q^{p^{2} (2n+p)})},
\end{aligned}
\end{equation*}

\noindent where $r := r^{*} + \{(n-1)/2 \}$ and $s:= s^{*} + \{ (n-1)/2 \}$ with $0 \leq \{ \alpha \} < 1$ denoting the fractional part of $\alpha$. Also, $J_{m}:= J_{m, 3m}$ with $J_{a,m} := j(q^{a}, q^{m})$, and $\overline{J}_{a,m}:=j(-q^{a}, q^{m})$.

\begin{thm} \label{main} The following are mock theta functions:

\begin{eqnarray} 
\mathcal{W}_{1}(q) &:=& \sum_{n \geq j \geq 1} \frac{(-1)_j (q; q^2)_{j-1} (-1)^{j} q^{n^2 + \binom{j+1}{2}}}{(-q)_n (q)_{n-j} (q)_{2j-1}} \label{m1} \\ 
&=& 4 m(-q^{17}, q^{48}, -1) - 4q^{-5} m(-q, q^{48}, -1) - \frac{2q^2 \theta_{3,2}(q^5, q^5, q)}{j(q,q^3)}, \nonumber \\
\mathcal{W}_{2}(q) &:=& \sum_{n \geq j \geq 1} \frac{(q; q^2)_n (-1)_{j} (q; q^2)_{j-1} (-1)^{n+j} q^{\binom{j+1}{2}}}{(-q)_{n} (q)_{n-j} (q)_{2j-1}} \label{m2} \\ &=& 4 m(-q, q^8, -1) + \frac{2q \theta_{1,2}(-q^2, -q^2, q)}{j(-1,q)}, \nonumber \\
\mathcal{W}_{3}(q) &:=& \sum_{n \geq j \geq 1} \frac{(q; q^2)_{n} (-1)_j (q^2; q^4)_{j-1} (-1)^{n+j} q^{n^2 + j^2 + j}}{(-q^2; q^2)_{n} (q^2; q^2)_{n-j} (q^2; q^2)_{2j-1}} \label{m4} \\ &=& 4m(-q, q^{12}, -1) + \frac{2q^3 \theta_{1,1}(-q^7, -q^7, q^4)}{j(-q, q^4)}, \nonumber \\
\mathcal{W}_{4}(q)&:=& \sum_{n \geq j \geq 0} \frac{(-q)_{j} (q; q^2)_{j} (-1)^j q^{n^2 + n + \binom{j+1}{2}}}{(-q)_n (q)_{n-j} (q)_{2j+1}}  \label{m6} \\ &=& -2q^{-4} m(-q^5, q^{48}, -1) - 2q^{-2} m(-q^{11}, q^{48}, -1) + \frac{\theta_{3,2}(q^3, q^3, q)}{j(q, q^3)} \nonumber .
\end{eqnarray}

\end{thm}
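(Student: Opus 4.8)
The plan is to prove the $q$-series identities in Theorem~\ref{main} by combining the Bailey machinery of Lemmas~\ref{lemma1} and~\ref{lemma2} with a change-of-basis for the relevant indefinite theta functions. First I would substitute the Bailey pair $(a_n',b_n')$ of Lemma~\ref{lemma1} into the limiting Bailey identity~\eqref{limitBailey} with $b,c \to \infty$ (and $a=1$), and similarly the pair $(\alpha_n',\beta_n')$ of Lemma~\ref{lemma2} with $b,c \to \infty$ (and $a=q$); this immediately produces the double-sum sides of~\eqref{m1}--\eqref{m6} (possibly after small specializations $q \to q^2$ together with an extra application of the Bailey lemma to account for the $(q^2;q^4)$ factors in $\mathcal{W}_3$, and after reindexing to match the stated summands). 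The left-hand side of the resulting identity is then $\frac{1}{(q)_\infty}\sum_{n\ge 0}\alpha_n'$ (resp.\ with $(aq)_\infty = (q^2)_\infty$), so the task reduces to evaluating $\sum_n \alpha_n'$, where the $\alpha_n'$ are explicit two-variable indefinite (``partial'') theta sums in the index $j$.

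The heart of the matter is the evaluation of these indefinite theta functions. Writing $\sum_{n\ge 0}\alpha_n'$ out using the formulas in Lemma~\ref{lemma1} or~\ref{lemma2}, one obtains a sum of the shape $\sum_{n} \sum_{j} (\pm) q^{Q(n,j)}$ where $Q$ is an indefinite quadratic form of signature $(1,1)$; the key is to recognize this as (a rescaling of) an indefinite theta function attached to a quadratic form whose associated cone decomposition yields both an Appell--Lerch piece $m(x,q,z)$ and a genuine theta quotient. Concretely, I would rewrite the double sum in the Hecke--Rogers form $\sum_{(n,j)\in\mathbb{Z}^2}\mathrm{sgn}(\cdots)(-1)^{\cdots}q^{Q(n,j)}$, then apply the standard splitting of such a form into Appell--Lerch series plus theta functions — the same technology used by Zwegers and by Hickerson--Mortenson. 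The function $\theta_{n,p}(x,y,q)$ defined just before the theorem is precisely the general ``theta part'' that arises from this procedure (it is built out of products and quotients of $j(\cdot,\cdot)$'s), so the computation amounts to matching parameters: identifying the modulus (here $48$, $8$, $12$) from the discriminant of $Q$, locating the cusp forms $m(-q^a,q^M,-1)$ from the relevant residue sum, and checking that the remaining theta quotient equals the displayed $\theta_{n,p}$ specialization via the definition of $\theta_{n,p}$. Once this identification is made, the modularity statement is automatic: $m(x,q,z)$ is a mock theta function, the $\overline{J}$ and $j$ quotients $\theta_{n,p}$ are (quotients of) classical theta functions, hence each $\mathcal{W}_i(q)$ is a mock theta function.

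The main obstacle I anticipate is the indefinite-theta bookkeeping: putting $\sum_n\alpha_n'$ into a symmetric Hecke-type double sum over $\mathbb{Z}^2$ requires carefully extending the finite $j$-ranges $-n \le j \le n-1$ (etc.)\ to all of $\mathbb{Z}$ with the correct sign function, completing squares to expose the quadratic form, and then tracking the many $q$-power shifts that appear when one collects terms modulo the relevant modulus. A secondary technical point is handling the parity split (the $\alpha_{2n}'$ versus $\alpha_{2n+1}'$ cases) uniformly, and, for $\mathcal{W}_3$, correctly inserting the additional step along the Bailey chain that converts a Bailey pair relative to $q$ into one involving $(q^2;q^4)$-factors under $q\to q^2$. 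None of these steps is conceptually deep, but the arithmetic is delicate, and the cleanest route is to verify the final identities by matching $q$-expansions to high order as a check on the parameter identification before (or alongside) presenting the closed-form indefinite-theta manipulation.

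For the last identity of the paper — the promised relation between one of the $\mathcal{W}_i$ and the two Gordon--McIntosh mock theta functions — I would separately expand those classical functions in their known Appell--Lerch form and match them against the $m(x,q,z) + \theta$ decomposition just obtained, reducing the claim to an identity among theta quotients that can be verified by the standard modular transformation properties of $j(x,q)$ (or again by a finite $q$-expansion check together with a bound on the weight and level of the difference).
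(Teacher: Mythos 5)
Your proposal follows essentially the same route as the paper: feed the Bailey pairs of Lemmas \ref{lemma1} and \ref{lemma2} into the limiting form of Bailey's lemma, rewrite the resulting sums over $a_n'$ (resp.\ $\alpha_n'$) as Hecke-type double sums $f_{n,n+p,n}(x,y,q)$, and invoke the Hickerson--Mortenson decomposition into Appell--Lerch series plus the theta quotient $\theta_{n,p}$. The only adjustments needed are in the details you left open: the paper takes $b=-\sqrt{q}$, $c=\sqrt{q}$ for $\mathcal{W}_2$ and $b=q$, $c\to\infty$ with $q\to q^2$ for $\mathcal{W}_3$ (rather than $b,c\to\infty$ throughout), and it uses the functional equations \eqref{fprop1} and \eqref{fprop2} to collapse the parity-split pieces into a single $f_{3,5,3}$ for $\mathcal{W}_1$ and $\mathcal{W}_4$.
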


It should be noted that the series defining $\mathcal{W}_{2}(q)$ does not converge. However, similar to the sixth order mock theta function $\mu(q)$ \cite{AH}, the sequence of even partial sums and the sequence of odd partial sums both converge. We define $\mathcal{W}_{2}(q)$ as the average of these two values.

To prove Theorem \ref{main} we first use the Bailey machinery to express the $\mathcal{W}_i$ in terms of Hecke-type double sums $f_{a,b,c}(x,y,q)$, where

\begin{equation} \label{fdef}
f_{a,b,c}(x,y,q) : = \sum_{\text{sg}(r) = \text{sg}(s)} \text{sg}(r) (-1)^{r+s} x^r y^s q^{a \binom{r}{2} + brs + c \binom{s}{2}}.
\end{equation}

\noindent Here $x$, $y \in \mathbb{C}^{*}$ and sg$(r):=1$ for $r \geq 0$ and sg$(r):=-1$ for $r <0$.  Then we apply recent results of Hickerson and Mortenson \cite{Hi-Mo1} to express the Hecke-type double sums as Appell-Lerch series $m(x, q, z)$ (up to the addition of weakly holomorphic modular forms).   

We highlight one connection to classical mock theta functions.   Namely, we express the multisum \eqref{m2} in terms of the ``eighth order" mock theta functions $S_1(q)$ and $T_1(q)$, defined by (see \cite{Go-Mc1})

$$
S_1(q) := \sum_{n \geq 0} \frac{q^{n(n+2)} (-q; q^2)_n}{(-q^2; q^2)_n}
$$

\noindent and

$$
T_1(q) := \sum_{n \geq 0} \frac{q^{n(n+1)} (-q^2; q^2)_n}{(-q; q^2)_{n+1}}.
$$

\begin{cor} \label{8th} We have the identity

$$
\mathcal{W}_{2}(q)= 2q T_1(q) -q S_{1}(q).
$$

\end{cor}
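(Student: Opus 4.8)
The plan is to prove Corollary \ref{8th} by combining the formula for $\mathcal{W}_2(q)$ from Theorem \ref{main} with known Appell--Lerch / Hecke-type representations of the eighth order mock theta functions $S_1(q)$ and $T_1(q)$. From Theorem \ref{main} we have
\[
\mathcal{W}_2(q) = 4\, m(-q, q^8, -1) + \frac{2q\, \theta_{1,2}(-q^2, -q^2, q)}{j(-1,q)},
\]
so the task reduces to showing that the right-hand side equals $2qT_1(q) - qS_1(q)$. I would first look for a representation of $2qT_1(q) - qS_1(q)$ in the same shape: a linear combination of an Appell--Lerch series $m(-q,q^8,-1)$ (or a sibling $m(x,q^8,z)$ that can be reduced to it via the standard $m$-transformation formulas in \cite{Hi-Mo1}) plus a theta quotient.

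The first concrete step is to unwind $\theta_{1,2}(-q^2,-q^2,q)$ from its definition. With $n = 1$, $p = 2$ one has $np(2n+p) = 8$, $p^2(2n+p) = 16$, $np^2 = 8$, $(n-1)/2 = 0$ so $r = r^*$, $s = s^*$ run over $\{0,1\}$; the exponents and the arguments of the various $j(\cdot,\cdot)$ collapse to explicit powers of $q$. This makes $\theta_{1,2}(-q^2,-q^2,q)/j(-1,q)$ into a manifestly modular (weakly holomorphic) quotient of theta functions, i.e.\ a finite sum of quotients of the $J_{a,m}$. In parallel I would recall from the literature on eighth order mock theta functions — Gordon--McIntosh \cite{Go-Mc1} and the Appell--Lerch treatment in Hickerson--Mortenson \cite{Hi-Mo1} — the expressions of $S_1(q)$ and $T_1(q)$ in terms of $m(x,q,z)$ and theta quotients. (Alternatively one can derive such expressions directly: $S_1$ and $T_1$ each arise from a Bailey pair, so running them through the same Bailey machinery used for the $\mathcal{W}_i$ produces Hecke-type double sums $f_{a,b,c}$, which \cite{Hi-Mo1} then converts to $m$-functions.) Forming $2qT_1(q) - qS_1(q)$, the Appell--Lerch pieces should combine into $4m(-q,q^8,-1)$ after applying the change-of-$z$ and modular-transformation identities for $m$, and what remains is a theta quotient.

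The decisive step is then a pure theta-function identity: show that the theta quotient coming from $2qT_1 - qS_1$ minus $4m(-q,q^8,-1)$ equals $2q\,\theta_{1,2}(-q^2,-q^2,q)/j(-1,q)$. Once both sides have been reduced to rational expressions in the $J_{a,m}$ with all moduli a common power of $q$ (here powers dividing $q^{16}$), this is a finite identity among modular forms on a fixed congruence subgroup; it can be verified either by Jacobi triple product manipulations and the addition formulas for theta functions, or, as a check, by comparing sufficiently many Fourier coefficients. I expect the main obstacle to be bookkeeping: correctly specializing the elaborate definition of $\theta_{1,2}$, and marshalling the several $m$-transformation formulas of \cite{Hi-Mo1} (the $z \to z^{-1}$, $z \to qz$, and ``$m(x,q,z) = x\,m(x^{-1},q,z^{-1})$'' type relations) so that the stray Appell--Lerch terms cancel exactly rather than up to an unaccounted theta term. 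No genuinely new idea is needed beyond what already proves Theorem \ref{main}; the corollary is essentially a matching exercise once the eighth order functions are put into Appell--Lerch form.
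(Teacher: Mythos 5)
Your proposal follows essentially the same route as the paper: the authors likewise take the Hickerson--Mortenson Appell--Lerch representations of $S_1(q)$ and $T_1(q)$ (their equations (4.36) and (4.38)), use the change-of-$z$ formula \eqref{appell2} to align the $m(-q,q^8,\cdot)$ terms with the $4m(-q,q^8,-1)$ in \eqref{m2}, and reduce the corollary to a single theta-quotient identity, which they check with Garvan's MAPLE package. Your outline is correct and matches this in every essential step.
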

\noindent Similar identities involving mock theta functions and multisums were given by Andrews \cite[Section 13]{An4}, and more could be deduced from \cite[Theorem 2.4]{Br-Lo-Os1}.

The paper proceeds as follows.   Some background material on Hecke-type double sums and Appell-Lerch series is collected in Section 2, and Theorem \ref{main} and Corollary \ref{8th} are established in Section 3.

\section{Preliminaries}

We recall some relevant preliminaries. The most important is a result which allows us to convert from the Hecke-type double sums \eqref{fdef} to Appell-Lerch series. Define

\begin{equation} \label{g}
\begin{aligned}
g_{a,b,c}(x, y, q, z_1, z_0) & := \sum_{t=0}^{a-1} (-y)^t q^{c\binom{t}{2}} j(q^{bt} x, q^a) m\left(-q^{a \binom{b+1}{2} - c \binom{a+1}{2} - t(b^2 - ac)} \frac{(-y)^a}{(-x)^b}, q^{a(b^2 - ac)}, z_0) \right)\\
& + \sum_{t=0}^{c-1} (-x)^t q^{a \binom{t}{2}} j(q^{bt} y, q^c) m\left(-q^{c\binom{b+1}{2} - a\binom{c+1}{2} - t(b^2 -ac)}  \frac{(-x)^c}{(-y)^b}, q^{c(b^2 - ac)}, z_1\right).
\end{aligned}
\end{equation}

Following \cite{Hi-Mo1}, we use the term ``generic" to mean that the parameters do not cause poles in the Appell-Lerch series or in the quotients of theta functions.

\begin{thm}{\cite[Theorem 0.3]{Hi-Mo1}} \label{hm} Let $n$ and $p$ be positive integers with $(n$, $p)=1$. For generic $x$, $y \in \mathbb{C}^{*}$

$$
f_{n, n+p, n}(x, y, q) = g_{n, n+p, n}(x, y, q, -1, -1) + \theta_{n,p}(x,y,q).
$$

\end{thm}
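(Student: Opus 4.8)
The plan is to prove the identity by the lattice-decomposition method of Hickerson and Mortenson, exploiting the fact that the quadratic form $Q(r,s) = n\binom{r}{2} + (n+p)rs + n\binom{s}{2}$ in \eqref{fdef} is indefinite: its discriminant is $b^2 - ac = (n+p)^2 - n^2 = p(2n+p) > 0$. Because $a = c = n$, the sum $f_{n,n+p,n}(x,y,q)$ is symmetric under the simultaneous swap $r \leftrightarrow s$, $x \leftrightarrow y$, and this is exactly what produces the two structurally identical sums (over $t=0,\dots,a-1$ and over $t=0,\dots,c-1$) in the definition \eqref{g} of $g_{n,n+p,n}$. So I would first reduce to analyzing one of the two ``cones'' sg$(r)=$ sg$(s) \geq 0$ and recover the other half by symmetry. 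The target is to show that collapsing these cone sums yields precisely the Appell--Lerch pieces of $g$, with a residual theta contribution equal to $\theta_{n,p}$.

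First I would split off residues of the summation variables. Fixing $s$ and summing $r$ over a full residue system would give a Jacobi theta factor, since $\sum_{r} (-1)^r (x q^{bs})^r q^{a\binom{r}{2}} = j(x q^{bs}, q^a)$; the cone restriction sg$(r)=$ sg$(s)$ prevents this free summation and is exactly the source of the Appell--Lerch series. Concretely, I would write the ``short'' direction of the lattice as a residue $t$ modulo $a = n$, producing the factors $(-y)^t q^{c\binom{t}{2}} j(q^{bt}x, q^a)$ seen in \eqref{g}, and then sum the complementary ``long'' direction over the restricted cone. That restricted one-variable sum telescopes into an Appell--Lerch series $m\bigl(-q^{\,\cdots}(-y)^a/(-x)^b,\, q^{a(b^2-ac)},\, z\bigr)$ of nome $q^{a(b^2-ac)} = q^{\,np(2n+p)}$, matching the arguments in $g_{n,n+p,n}$. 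The hypothesis $(n,p)=1$ is what guarantees that these residue classes form a complete, non-redundant system modulo the relevant periods, so the decomposition neither double-counts nor omits lattice points in the interior of the cone.

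To pin down the normalization $z_0 = z_1 = -1$, I would carry out the previous step for general auxiliary parameters $z_0, z_1$, obtaining an identity $f_{n,n+p,n} = g_{n,n+p,n}(x,y,q,z_1,z_0) + \Theta(x,y,q,z_1,z_0)$ in which the correction $\Theta$ cancels the $z$-dependence of $g$ (since $f$ itself is $z$-independent). The standard properties of the Appell--Lerch series $m(x,q,z)$ from \cite{Hi-Mo1, Zw1}, in particular the change-of-$z$ formula expressing $m(x,q,z_1) - m(x,q,z_0)$ as a ratio of theta functions, govern how $\Theta$ transforms; specializing to $z_0 = z_1 = -1$ is what collapses $\Theta$ to the clean double theta quotient. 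Along the way I would have to account for the boundary of the cone and the corner terms near the origin — the finitely many lattice points that the Appell--Lerch telescoping over- or under-counts — and show they combine with the $z$-specialization into the explicit summand of $\theta_{n,p}$. Genericity of $x,y$ ensures no poles arise in any of the $j$-quotients or $m$-series during this manipulation.

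The hard part will be the last step: verifying that the accumulated theta material assembles exactly into the explicit expression defining $\theta_{n,p}(x,y,q)$. This requires sustained Jacobi-triple-product bookkeeping — repeatedly rewriting products of $j(\cdot,\cdot)$ and $\overline{J}$, $J_m$ factors, and correctly implementing the $2 \times 2$ array of residues $(r^*, s^*)$ together with the fractional-part shifts $\{(n-1)/2\}$ built into $\theta_{n,p}$. I expect the main obstacle to be tracking signs and $q$-powers through these theta manipulations so that the boundary/corner discrepancies and the $z=-1$ correction coincide term-by-term with the stated double sum, rather than merely up to an unidentified modular error.
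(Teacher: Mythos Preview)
The paper does not prove this statement. Theorem~\ref{hm} is stated in Section~2 (``Preliminaries'') with an explicit citation to \cite[Theorem 0.3]{Hi-Mo1} and is used as a black box: the paper invokes it in the proof of Theorem~\ref{main} to convert each $f_{a,b,c}$ into Appell--Lerch terms plus $\theta_{n,p}$, but never supplies an argument for it. So there is no ``paper's own proof'' to compare against.

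Your proposal is therefore not wrong so much as misplaced: you are sketching (at a high level, and in the right spirit) the Hickerson--Mortenson proof from \cite{Hi-Mo1}, while the present paper simply quotes that result. If the goal is to reproduce what this paper does, the correct ``proof'' is a one-line citation. If instead you want to actually establish the identity, your outline --- residue decomposition of the lattice sum modulo $a=n$, recognition of the free-direction sum as $j(q^{bt}x,q^a)$, telescoping the restricted cone sum into $m(\,\cdot\,,q^{np(2n+p)},z)$, and then the $z=-1$ specialization together with boundary bookkeeping to isolate $\theta_{n,p}$ --- is the standard route and matches the architecture of \cite{Hi-Mo1}. You correctly flag the genuine difficulty: the final identification of the residual theta material with the explicit double sum defining $\theta_{n,p}$ is long and delicate, and your sketch does not yet contain the mechanism (in \cite{Hi-Mo1} this is a separate structural theorem on $n\times n$ blocks of theta quotients) that forces those terms to match. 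But none of that is the present paper's concern.
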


We shall also require certain facts about $j(x,q)$, $m(x,q,z)$ and $f_{a,b,c}(x,y,q)$. From the definition of $j(x,q)$, we have

\begin{equation} \label{j1}
j(q^{n} x, q) = (-1)^{n} q^{-\binom{n}{2}} x^{-n} j(x,q)
\end{equation}

\noindent where $n \in \mathbb{Z}$ and

\begin{equation} \label{j2}
j(x,q) = j(q/x, q) = -x j(x^{-1}, q).
\end{equation}

Next, some relevant properties of the sum $m(x,q,z)$ are given in the following (see (2.2b) of Proposition 2.1 and Theorem 2.3 in \cite{Hi-Mo1}).

\begin{prop} \label{mprop} For generic $x$, $z$, $z_0 \in \mathbb{C}^{*}$.

\begin{equation} \label{appell1}
m(x,q,z) = x^{-1} m(x^{-1}, q, z^{-1})
\end{equation}

\noindent and

\begin{equation} \label{appell2}
m(x, q, z) = m(x, q, z_0) + \frac{z_0 J_1^3 j(z / z_{0}, q) j(x z z_{0}, q)}{j(z_0, q) j(z, q) j(xz_0, q) j(xz, q)}.
\end{equation}

\end{prop}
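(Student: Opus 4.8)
The plan is to treat the two parts of Proposition~\ref{mprop} separately, since they are of quite different character. The inversion formula \eqref{appell1} I would prove by a direct manipulation of the defining series for $m$. Starting from $m(x^{-1},q,z^{-1}) = j(z^{-1},q)^{-1}\sum_{r}(-1)^{r} q^{\binom{r}{2}}z^{-r}/(1-q^{r-1}x^{-1}z^{-1})$, I would rewrite each denominator as $1-q^{r-1}x^{-1}z^{-1} = -q^{r-1}x^{-1}z^{-1}(1-q^{1-r}xz)$ and then substitute $r\mapsto 2-r$. Using $\binom{2-r}{2} = \binom{r}{2}-r+1$, the exponent of $q$ collapses back to $\binom{r}{2}$, the denominator becomes $1-q^{r-1}xz$, and the prefactor produces exactly a factor $x$ together with a $z^{-1}$. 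Combining this with $j(z^{-1},q) = -z^{-1}j(z,q)$, the special case of \eqref{j2}, the $z^{-1}$'s cancel and one obtains $m(x^{-1},q,z^{-1}) = x\,m(x,q,z)$, which is \eqref{appell1}. This step is routine bookkeeping with a triple-product-style index shift.

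The real content is \eqref{appell2}, which I would treat as an identity between meromorphic functions of $z$ on the torus $\mathbb{C}^{*}/q^{\mathbb{Z}}$. First I would establish that both sides are $q$-periodic in $z$. For the left side, the shift $r\mapsto r+1$ in the defining series together with $j(qz,q) = -z^{-1}j(z,q)$ (the $n=1$ case of \eqref{j1}) gives $m(x,q,qz) = m(x,q,z)$; hence $D(z) := m(x,q,z) - m(x,q,z_0)$ is invariant under $z\mapsto qz$. For the right side $R(z)$, applying \eqref{j1} to each of the four $z$-dependent theta factors $j(z/z_0,q)$, $j(xzz_0,q)$, $j(z,q)$, $j(xz,q)$ under $z\mapsto qz$ produces prefactors whose product is $1$, so $R(qz)=R(z)$ as well. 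Thus $F := D-R$ descends to a meromorphic function on the compact torus.

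Next I would locate the poles. From $1/j(z,q)$ and the denominators $1-q^{r-1}xz$, the function $D$ has at worst simple poles at $z\in q^{\mathbb{Z}}$ and $z\in x^{-1}q^{\mathbb{Z}}$, while $R$ has simple poles at exactly the same places, coming from $j(z,q)$ and $j(xz,q)$; on the torus these are the two points $z=1$ and $z=x^{-1}$. I would then compute the residue at $z=x^{-1}$ on both sides. For $D$ only the $r=1$ term of the series is singular, giving $\operatorname{Res}_{z=x^{-1}}m = x^{-2}/j(x^{-1},q) = -x^{-1}/j(x,q)$ after \eqref{j2}. For $R$ the only singular factor near $z=x^{-1}$ is $j(xz,q)\sim(1-xz)J_{1}^{3}$, using the product definition of $j$ and $J_{1}=(q)_{\infty}$; a short simplification, again via \eqref{j2} applied to $j((xz_0)^{-1},q)$, yields the same value $-x^{-1}/j(x,q)$.

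Finally I would invoke Liouville's theorem on the torus. Since the residues at $z=x^{-1}$ agree, $F$ is regular there and so has at most a single simple pole per fundamental domain, namely at $z=1$. But a meromorphic function on a compact genus-one surface cannot have a single simple pole: the $1$-form $F\,dz/z$ would then have nonzero total residue, contradicting the residue theorem. Hence $F$ is holomorphic, therefore constant, and evaluating at $z=z_0$ — where $D(z_0)=0$ and $R(z_0)=0$ because $j(z_0/z_0,q)=j(1,q)=0$ — forces the constant to vanish, proving \eqref{appell2}. I expect the main obstacle to be the residue bookkeeping on the torus: one must keep the $dz/z$ (rather than $dz$) normalization so that the residue theorem is applied to a genuine $1$-form, and one must verify that genericity of $x,z,z_0$ keeps the two poles $z=1$ and $z=x^{-1}$ distinct and off the evaluation point. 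This genericity is precisely the standing hypothesis of the proposition, and it is exactly what lets the Liouville argument close without computing the more delicate residue at $z=1$ by hand.
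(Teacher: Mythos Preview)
Your proof is correct. Both the series manipulation for \eqref{appell1} and the elliptic-function Liouville argument for \eqref{appell2} go through as you describe; the residue computations at $z=x^{-1}$ check out, and the single-simple-pole contradiction on the torus is the right way to close.

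However, you should be aware that the paper does not actually prove Proposition~\ref{mprop} at all: it simply quotes the two identities from Hickerson--Mortenson (specifically, (2.2b) of their Proposition~2.1 and their Theorem~2.3) and moves on. So there is no ``paper's own proof'' to compare against---your proposal supplies a self-contained argument where the paper gives only a citation. What your approach buys is independence from the reference and a transparent reason \emph{why} the $z$-dependence of $m(x,q,z)$ is governed by a ratio of theta functions (it is the unique $q$-periodic function with the prescribed simple poles and normalization). The paper's route buys brevity, since these facts are standard in the Appell--Lerch literature and not the focus of the present work.
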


Finally, two important transformation properties of $f_{a,b,c}(x,y,q)$ are given in the following (see Propositions 5.1 and 5.2 in \cite{Hi-Mo1}).

\begin{prop} For $x$, $y \in \mathbb{C}^{*}$,

\begin{equation} \label{fprop1}
\begin{aligned}
f_{a,b,c}(x,y,q) = & f_{a,b,c}(-x^2 q^a, -y^2 q^c, q^4) - x f_{a,b,c}(-x^2 q^{3a}, -y^2 q^{c+2b}, q^4) \\
& -y f_{a,b,c}(-x^2 q^{a+2b}, -y^2 q^{3c}, q^4) + xyq^b f_{a,b,c}(-x^2 q^{3a+2b}, -y^2 q^{3c+2b}, q^4)
\end{aligned}
\end{equation}

\noindent and 

\begin{equation} \label{fprop2}
f_{a,b,c}(x, y, q)= - \frac{q^{a+b+c}}{xy} f_{a,b,c}(q^{2a+b}/ x, q^{2c+b} / y, q).
\end{equation}

\end{prop}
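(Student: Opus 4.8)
The plan is to prove both identities directly from the defining double sum \eqref{fdef} by reindexing the lattice sum, with no external input required. Identity \eqref{fprop1} is a $2$-dissection of the pair of summation indices $(r,s)$ modulo $2$, while \eqref{fprop2} comes from the involution $(r,s) \mapsto (-1-r,-1-s)$ of the summation region. In both cases the entire content is a term-by-term matching of the quadratic exponent $Q(r,s) := a\binom{r}{2} + brs + c\binom{s}{2}$, the monomial $x^r y^s$, and the sign $\mathrm{sg}(r)(-1)^{r+s}$, so the argument is purely combinatorial and the manipulations are justified termwise.

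For \eqref{fprop1}, I would partition the index set $\{(r,s) : \mathrm{sg}(r) = \mathrm{sg}(s)\}$ into the four parity classes and write $r = 2R + \epsilon_1$, $s = 2S + \epsilon_2$ with $\epsilon_1, \epsilon_2 \in \{0,1\}$. The key computation is that for each class the exponent collapses correctly: using $\binom{2R}{2} = 2R^2 - R$ and $\binom{2R+1}{2} = 2R^2 + R$ one checks, for instance in the class $\epsilon_1 = \epsilon_2 = 0$, that $aR + 4a\binom{R}{2} = a\binom{2R}{2}$ and $4bRS = b(2R)(2S)$, so that the contribution is exactly $f_{a,b,c}(-x^2 q^a, -y^2 q^c, q^4)$; the three remaining classes produce the other three terms after absorbing the prefactors $-x$, $-y$, $xyq^b$ and the shifts by $q^{2a}$, $q^{2b}$, $q^{2c}$ in the arguments forced by the parities. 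The one point that must be checked with care, rather than assumed, is that the constraint $\mathrm{sg}(r) = \mathrm{sg}(s)$ translates into $\mathrm{sg}(R) = \mathrm{sg}(S)$ in every class; this relies on the elementary facts $\mathrm{sg}(2R+1) = \mathrm{sg}(R)$ and $\mathrm{sg}(2R) = \mathrm{sg}(R)$, so that each dissected sum is again a genuine $f_{a,b,c}$ over base $q^4$.

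For \eqref{fprop2}, I would apply the involution $\phi(r,s) = (-1-r,-1-s)$, which I first verify maps $\{\mathrm{sg}(r)=\mathrm{sg}(s)\}$ bijectively onto itself, since negating-and-shifting flips the sign of each coordinate simultaneously. Reindexing the sum by $\phi$ and recording the three transformations $\mathrm{sg}(-1-r) = -\mathrm{sg}(r)$, $(-1)^{(-1-r)+(-1-s)} = (-1)^{r+s}$, and $x^{-1-r}y^{-1-s}$, the only nontrivial ingredient is the exponent shift. Here I would use $\binom{-1-r}{2} = \binom{r}{2} + 2r + 1$ together with $(-1-r)(-1-s) = rs + r + s + 1$ to obtain $Q(-1-r,-1-s) = Q(r,s) + (2a+b)r + (b+2c)s + (a+b+c)$; the linear terms $(2a+b)r$ and $(b+2c)s$ are exactly what convert $x^{-r}$, $y^{-s}$ into the shifted arguments $q^{2a+b}/x$, $q^{2c+b}/y$, the constant $a+b+c$ gives the prefactor $q^{a+b+c}$, the sign flip gives the overall $-1$, and $x^{-1}y^{-1}$ gives the $1/(xy)$, reproducing the right-hand side.

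The main obstacle, such as it is, lies entirely in the sign and boundary bookkeeping rather than in any analytic difficulty. Concretely, the facts $\mathrm{sg}(2R+1) = \mathrm{sg}(R)$ (for \eqref{fprop1}) and $\mathrm{sg}(-1-r) = -\mathrm{sg}(r)$ (for \eqref{fprop2}) hinge on the convention $\mathrm{sg}(0) = 1$ and on the precise off-by-one in the involution, and it is here that an error would most easily creep in; one must confirm that the two half-regions $r,s \geq 0$ and $r,s \leq -1$ are interchanged cleanly with no index double-counted or omitted. Once these elementary sign identities are in hand, both \eqref{fprop1} and \eqref{fprop2} follow by collecting terms, and convergence is not an issue because each identity is an equality of the same lattice sum reorganized termwise.
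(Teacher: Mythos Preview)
Your argument is correct: both identities follow by the straightforward reindexing of the defining double sum that you outline, and the sign and boundary checks you flag (in particular $\mathrm{sg}(2R+\epsilon)=\mathrm{sg}(R)$ for $\epsilon\in\{0,1\}$ and $\mathrm{sg}(-1-r)=-\mathrm{sg}(r)$) all go through under the convention $\mathrm{sg}(0)=1$. Note, however, that the paper does not supply its own proof of this proposition; it is simply quoted from \cite{Hi-Mo1} (Propositions~5.1 and~5.2 there), so there is no in-paper argument to compare yours against.
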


\section{Proof of Theorem \ref{main}}

\begin{proof}[Proof of Theorem \ref{main}]

As the proofs of (\ref{m1})--(\ref{m6}) are similar, we give full details only for (\ref{m1}) and (\ref{m6}).  Recall that the goal is to express each double sum $q$-series in terms of Appell-Lerch series.   For (\ref{m1}), apply Lemma \ref{lemma1} and let $b$, $c \to \infty$ in (\ref{limitBailey}) to obtain

\begin{equation*}
\begin{aligned}
\mathcal{W}_{1}(q)  & = \sum_{n \geq 0} q^{n^2} b_n'(q) \\
& = \frac{1}{(q)_{\infty}} \sum_{n \geq 0} q^{n^2} a_n'(q) \\
& = \frac{1}{(q)_{\infty}} \Biggl( \sum_{n \geq 0} q^{4n^2} a_{2n}'(q) + \sum_{n \geq 0} q^{4n^2 + 4n + 1} a_{2n+1}'(q) \Biggr) \\
& =  \frac{2}{(q)_{\infty}} \Biggl( \sum_{n \geq 0} q^{8n^2 - n} \sum_{j=-n}^{n-1} q^{-2j^2 - 2j} - \sum_{n \geq 0} q^{8n^2 + n} \sum_{j=-n}^{n-1} q^{-2j^2 - 2j} \\
& - \sum_{n \geq 0} q^{8n^2 + 7n + 2} \sum_{j=-n}^{n} q^{-2j^2} - \sum_{n \geq 0} q^{8n^2 + 9n + 3} \sum_{j=-n}^{n} q^{-2j^2} \Biggr). \\
\end{aligned}
\end{equation*}

After replacing $n$ with $-n$ in the second sum and $n$ with $-n-1$ in the fourth sum, we let $n=(r+s+1)/2$, $j=(r-s-1)/2$ in the first two sums and $n=(r+s)/2$, $j=(r-s)/2$ in the latter two sums to find

\begin{equation*}
\begin{aligned}
\mathcal{W}_{1}(q) 
& =  \frac{2q^2}{(q)_{\infty}} \Biggl( \Bigl(  \sum_{\substack{r, s \geq 0 \\ r \not\equiv s \imod{2} }} - \sum_{\substack{r, s < 0 \\ r \not\equiv {s \imod{2}} }} \Bigr)  q^{\frac{3}{2} r^2 + 5rs + \frac{7}{2} r + \frac{3}{2} s^2 + \frac{7}{2} s} \\
& - \Bigl(  \sum_{\substack{r, s \geq 0 \\ r \equiv s \imod{2} }} - \sum_{\substack{r, s < 0 \\ r \equiv {s \imod{2}} }} \Bigr) q^{\frac{3}{2} r^2 + 5rs + \frac{7}{2}r + \frac{3}{2}s^2 + \frac{7}{2}s} \Biggr) \\
& =  \frac{2q^2}{(q)_{\infty}} \Biggl( 2q^{5} f_{3,5,3}(-q^{23}, -q^{19}, q^4) - f_{3,5,3}(-q^{13}, -q^{13}, q^4) - q^{15} f_{3,5,3}(-q^{29}, -q^{29}, q^4) \Biggr) \\
& = -\frac{2q^2}{(q)_{\infty}} f_{3,5,3}(q^5, q^5, q).
\end{aligned}
\end{equation*}
In the penultimate equality, we have replaced $(r,s)$ first by $(2r+1,2s)$ and then by $(2r,2s+1)$ in the first line, and then replaced $(r,s)$ first by $(2r,2s)$ and then by $(2r +1,2s+1)$ in the second line, and then invoked (\ref{fdef}).  In the last step, we have used (\ref{fprop1}). By Theorem \ref{hm}, (\ref{g}), (\ref{j1}) and (\ref{j2}), we have

\begin{equation*}
\begin{aligned}
f_{3,5,3}(q^5, q^5, q) = -2q^{-2} j(q, q^3) m(-q^{17}, q^{48}, -1) + 2q^{-7} j(q, q^3) m(-q, q^{48}, -1) + \theta_{3,2}(q^5, q^5, q)
\end{aligned}
\end{equation*}

\noindent and so

\begin{equation*} 
\begin{aligned}
\mathcal{W}_{1}(q) = 4 m(-q^{17}, q^{48}, -1) - 4q^{-5} m(-q, q^{48}, -1) - \frac{2q^2 \theta_{3,2}(q^5, q^5, q)}{j(q,q^3)}.
\end{aligned}
\end{equation*}

For (\ref{m2}), apply Lemma \ref{lemma1} and let $b=-\sqrt{q}$ and $c=\sqrt{q}$ in (\ref{limitBailey}) to get

\begin{equation*}
\begin{aligned}
\mathcal{W}_{2}(q) & = \sum_{n \geq 0} (-1)^n (q; q^2)_n b_n'(q) \\
& = \frac{(q; q^2)_{\infty}}{2(q^2; q^2)_{\infty}} \sum_{n \geq 0} (-1)^n a_n'(q) \\
& = \frac{(q; q^2)_{\infty}}{2(q^2; q^2)_{\infty}} \Biggl( \sum_{n \geq 0} a_{2n}'(q) - \sum_{n \geq 0} a_{2n+1}'(q) \Biggr) \\
& = \frac{(q; q^2)_{\infty}}{(q^2; q^2)_{\infty}} \Biggl( \sum_{n \geq 0}q^{4n^2 - n} \sum_{j=-n}^{n-1} q^{-2j^2 - 2j} - \sum_{n \geq 0} q^{4n^2 + n} \sum_{j=-n}^{n-1} q^{-2j^2 - 2j} \\
& + \sum_{n \geq 0} q^{4n^2 + 3n + 1} \sum_{j=-n}^{n} q^{-2j^2} - \sum_{n \geq 0} q^{4n^2 + 5n + 2} \sum_{j=-n}^{n} q^{-2j^2} \Biggr).
\end{aligned}
\end{equation*}

\noindent As before, we proceed with

\begin{equation*}
\begin{aligned}
\mathcal{W}_{2}(q) & = \frac{(q; q^2)_{\infty}}{(q^2; q^2)_{\infty}} \Biggl ( \Bigl(  \sum_{\substack{r, s \geq 0 \\ r \not\equiv s \imod{2} }} - \sum_{\substack{r, s < 0 \\ r \not\equiv {s \imod{2}} }} \Bigr) q^{\frac{1}{2}r^2 + 3rs + \frac{3}{2}r + \frac{1}{2}s^2 + \frac{3}{2}s + 1} \\
& +  \Bigl(  \sum_{\substack{r, s \geq 0 \\ r \equiv s \imod{2} }} - \sum_{\substack{r, s < 0 \\ r \equiv {s \imod{2}} }} \Bigr) q^{\frac{1}{2}r^2 + 3rs + \frac{3}{2}r + \frac{1}{2}s^2 + \frac{3}{2}s + 1}  \Biggr) \\
& = \frac{(q; q^2)_{\infty}}{(q^2; q^2)_{\infty}} \Biggl ( \Bigl(  \sum_{r, s \geq 0} - \sum_{r, s < 0} \Bigr) q^{\frac{1}{2}r^2 + 3rs + \frac{3}{2}r + \frac{1}{2}s^2 + \frac{3}{2}s + 1} \Biggr) \\
& =  \frac{q(q; q^2)_{\infty}}{(q^2; q^2)_{\infty}} f_{1,3,1}(-q^2, -q^2, q).
\end{aligned}
\end{equation*}

\noindent By Theorem \ref{hm}, (\ref{g}) and (\ref{j1}), we have

\begin{equation*}
\begin{aligned}
f_{1,3,1}(-q^2, -q^2, q)=2q^{-1} j(-1,q) m(-q, q^8, -1) + \theta_{1,2}(-q^2, -q^2,q)
\end{aligned}
\end{equation*}

\noindent and so

\begin{equation*} 
\begin{aligned}
\mathcal{W}_{2}(q) =4 m(-q, q^8, -1) + \frac{2q \theta_{1,2}(-q^2, -q^2, q)}{j(-1,q)}.
\end{aligned}
\end{equation*}

For (\ref{m4}), apply Lemma \ref{lemma1} and let $b=q$, $c \to \infty$ and $q \to q^2$ in (\ref{limitBailey}) to get

\begin{equation*}
\begin{aligned}
\mathcal{W}_{3}(q) & = \sum_{n \geq 0} (-1)^n (q; q^2)_n q^{n^2} b_{n}'(q^2) \\
& = \frac{(q; q^2)_{\infty}}{(q^2; q^2)_{\infty}} \sum_{n \geq 0} (-1)^n q^{n^2} a_{n}'(q^2) \\
& = \frac{(q; q^2)_{\infty}}{(q^2; q^2)_{\infty}} \Biggl( \sum_{n \geq 0} q^{4n^2} a_{2n}'(q^2) - \sum_{n \geq 0} q^{4n^2 + 4n + 1} a_{2n+1}'(q^2) \Biggr) \\
& = \frac{2(q; q^2)_{\infty}}{(q^2; q^2)_{\infty}} \Biggl( \sum_{n \geq 0} q^{12n^2 - 2n} \sum_{j=-n}^{n-1} q^{-4j^2 - 4j} -  \sum_{n \geq 0} q^{12n^2 + 2n} \sum_{j=-n}^{n-1} q^{-4j^2 - 4j} \\
& +  \sum_{n \geq 0} q^{12n^2 + 10n + 3} \sum_{j=-n}^{n} q^{-4j^2} - \sum_{n \geq 0} q^{12n^2 + 14n + 5} \sum_{j=-n}^{n} q^{-4j^2} \Biggr). \\
\end{aligned}
\end{equation*}

\noindent So,

\begin{equation*}
\begin{aligned}
\mathcal{W}_{3}(q) & = \frac{2(q; q^2)_{\infty}}{(q^2; q^2)_{\infty}} \Biggl( \Bigl(  \sum_{\substack{r, s \geq 0 \\ r \not\equiv s \imod{2} }} - \sum_{\substack{r, s < 0 \\ r \not\equiv {s \imod{2}} }} \Bigr) q^{2r^2 + 8rs + 5r + 2s^2 + 5s + 3} \\
& + \Bigl(  \sum_{\substack{r, s \geq 0 \\ r \equiv s \imod{2} }} - \sum_{\substack{r, s < 0 \\ r \equiv {s \imod{2}} }} \Bigr) q^{2r^2 + 8rs + 5r + 2s^2 + 5s + 3}  \Biggr) \\
& =  \frac{2(q; q^2)_{\infty}}{(q^2; q^2)_{\infty}} \Biggl( \Bigl(  \sum_{r, s \geq 0} - \sum_{r, s < 0} \Bigr) q^{2r^2 + 8rs + 5r + 2s^2 + 5s + 3}  \Biggr) \\
& = \frac{2q^3 (q; q^2)_{\infty}}{(q^2; q^2)_{\infty}} f_{1,2,1}(-q^7, -q^7, q^4). 
\end{aligned}
\end{equation*}

\noindent By Theorem \ref{hm}, (\ref{g}), (\ref{j1}) and (\ref{j2}), we have

\begin{equation*}
\begin{aligned}
f_{1,2,1}(-q^7, -q^7, q^4) = 2q^{-3} j(-q, q^4) m(-q, q^{12}, -1) + \theta_{1,1}(-q^7, -q^7, q^4)
\end{aligned}
\end{equation*}

\noindent and so 

\begin{equation*} 
\begin{aligned}
\mathcal{W}_{3}(q) = 4m(-q, q^{12}, -1) + \frac{2q^3 \theta_{1,1}(-q^7, -q^7, q^4)}{j(-q, q^4)}.
\end{aligned}
\end{equation*}

Finally, for (\ref{m6}), apply Lemma \ref{lemma2} and let $b$, $c \to \infty$ in (\ref{limitBailey}) to get

\begin{equation*}
\begin{aligned}
\mathcal{W}_{4}(q) & = \sum_{n \geq 0} q^{n^2 + n} \beta_{n}'(q) \\
& = \frac{(1-q)}{(q)_{\infty}} \sum_{n \geq 0} q^{n^2 + n} \alpha_{n}'(q) \\
& = \frac{(1-q)}{(q)_{\infty}}\Biggl( \sum_{n \geq 0} q^{4n^2 + 2n} \alpha_{2n}'(q) + \sum_{n \geq 0} q^{4n^2 + 6n + 2} \alpha_{2n+1}'(q) \Biggr) \\
& =  \frac{1}{(q)_{\infty}} \Biggl( \sum_{n \geq 0} q^{8n^2 + 5n} \sum_{j=-n}^{n-1} q^{-2j^2 - 2j} + \sum_{n \geq 0} q^{8n^2 + 3n} \sum_{j=-n}^{n} q^{-2j^2} \\
& - \sum_{n \geq 0} q^{8n^2 + 13n + 5} \sum_{j=-n}^{n} q^{-2j^2} - \sum_{n \geq 0} q^{8n^2 + 11n + 3} \sum_{j=-n-1}^{n} q^{-2j^2 - 2j} \Biggr). \\
\end{aligned}
\end{equation*}

\noindent After replacing $n$ with $-n-1$ in the third and fourth sums, we let $n=(r+s+1)/2$, $j=(r-s-1)/2$ in the first and fourth sums and $n=(r+s)/2$, $j=(r-s)/2$ in the second and third sums to get

\begin{equation*}
\begin{aligned}
\mathcal{W}_{4}(q) & = \frac{1}{(q)_{\infty}} \Biggl(  \Bigl(  \sum_{\substack{r, s \geq 0 \\ r \not\equiv s \imod{2} }} - \sum_{\substack{r, s < 0 \\ r \not\equiv {s \imod{2}} }} \Bigr) q^{\frac{3}{2}r^2 + 5rs + \frac{13}{2}r + \frac{3}{2}s^2 + \frac{13}{2}s + 5} \\
& + \Bigl(  \sum_{\substack{r, s \geq 0 \\ r \equiv s \imod{2} }} - \sum_{\substack{r, s < 0 \\ r \equiv {s \imod{2}} }} \Bigr) q^{\frac{3}{2}r^2 + 5rs + \frac{3}{2}r + \frac{3}{2}s^2 + \frac{3}{2}s}  \Biggr) \\
& = \frac{1}{(q)_{\infty}} \Biggl( 2q^{13} f_{3,5,3}(-q^{25}, -q^{29}, q^4) + f_{3,5,3}(-q^9, -q^9, q^4) + q^{11} f_{3,5,3}(-q^{25}, -q^{25}, q^4) \Biggr) \\
& = \frac{1}{(q)_{\infty}} f_{3,5,3}(q^3, q^3,q)
\end{aligned}
\end{equation*}

\noindent where in the last step we have used (\ref{fprop1}) and (\ref{fprop2}). By Theorem \ref{hm}, (\ref{g}), (\ref{j1}), (\ref{j2}) and (\ref{appell1}), we have

\begin{equation*}
\begin{aligned}
f_{3,5,3}(q^3, q^3, q) = -2q^{-4} j(q,q^3) m(-q^5, q^{48}, -1) - 2q^{-2} j(q, q^3) m(-q^{11}, q^{48}, -1) + \theta_{3,2}(q^3, q^3, q)
\end{aligned}
\end{equation*}

\noindent and so 

\begin{equation*} 
\begin{aligned}
\mathcal{W}_{4}(q)= -2q^{-4} m(-q^5, q^{48}, -1) - 2q^{-2} m(-q^{11}, q^{48}, -1) + \frac{\theta_{3,2}(q^3, q^3, q)}{j(q, q^3)}.
\end{aligned}
\end{equation*}

\end{proof}

\begin{proof}[Proof of Corollary \ref{8th}]
Equations (4.36) and (4.38) of \cite{Hi-Mo1} state

$$
S_{1}(q) = -2q^{-1} m(-q, q^8, -1) + \frac{\overline{J}_{3,8} J_{2,8}^2}{q J_{1,8}^2}
$$

\noindent

$$
T_{1}(q) = q^{-1} m(-q, q^8, q^6).
$$

\noindent By (\ref{m2}), (\ref{j2}) and (\ref{appell2}), the claim is equivalent to the identity

$$
 \frac{ \overline{J}_{3,8} J_{2,8}^2}{ J_{1,8}^2} + \frac{2q \theta_{1,2}(-q^2, -q^2, q)}{j(-1,q)} = \frac{-2j(q^8, q^{24})^3 j(-q^6, q^8)}{j(q^6, q^8) j(-1, q^8) j(-q^7, q^8)}.
$$
We have verified this identity using Garvan's MAPLE program (see \url{http://www.math.ufl.edu/~fgarvan/qmaple/theta-supplement/}).

\end{proof}

\section*{Acknowledgements}
The authors would like to thank Song Heng Chan and Renrong Mao for their helpful comments.

\end{document}